\newtheorem{theorem}{Theorem}[section]
 \newtheorem{lemma}[theorem]{Lemma}
\newtheorem{Label}[theorem]{}
\begin{document}
\begin{center}

{\large\bf
 The first coefficient of Homflypt and Kauffman polynomials:
        Vertigan proof of polynomial complexity using dynamic programming}
\end{center}
 \begin{center}
by J\'ozef H.Przytycki
 \end{center}
 \vspace{0.1in}
\begin{quotation}\ \\
{\bf Abstract.}
We describe the polynomial time complexity algorithm
for computing  first coefficients of the skein (Homflypt)
and Kauffman polynomial invariants of links, discovered by D.Vertigan
in 1992 but never published.
\end{quotation}

\section{Introduction}\label{Section 1}
We showed in \cite{P-P-2} that an essential part of the Jones-type
polynomial link invariants can be computed in subexponential time. 
This is in a sharp contrast to the result of Jaeger, 
Vertigan and Welsh \cite{JVW} that computing the whole
polynomial and most of its evaluations is $\#P$-hard
and is conjectured to be of exponential complexity.

Motivated by \cite{P-P-2}, Dirk Vertigan described
the polynomial time complexity algorithm
for computing  first coefficients of the skein (Homflypt) and Kauffman
polynomials of links\footnote{On 13 Jan 1992 we got an e-mail
from Paul Seymour, editor of Proceedings to which \cite{P-P-2}
was submitted informing as that:
``The referee for your paper on polynomials for the Seattle meeting has done
some further work of his own, extending the results in your paper, and now
he is worried that he has abused his position as referee for his own gain.
I asked him to summarize his results and send them to me, and told him I would
pass them on to you. So please, what are your reactions? Do you have any
objections to the referee publishing the stuff below as his own work?"
We were very enthusiastic about the referee's result but he somehow
never published the paper, and we included his description in
the appendix of our preprint \cite{P-P-1}.}. The polynomial time
complexity of other coefficients follows easily from the first
coefficient. We express the time complexity of our algorithms as 
a function of the number of crossings, $n$, and we assume 
that the number of link components, com$(L)$, of a link $L$ 
is less than or equal to the number of crossings.

The skein (Homflypt) polynomial, $P_L(a,z) \in Z[a^{\pm 1},z^{\pm 1}]$,
 of oriented links in $R^3$ is 
defined recursively as follows \cite{HOMFLY,PT}:
\begin{description}
\item [(i)]
$P_{trivial knot}(a,z) = 1$,
\item [(ii)]
$aP_{L_+}(a,z) + a^{-1}P_{L_-}(a,z) = zP_{L_0}(a,z).$
\end{description}
Let $com(L)$ denote the number of components of $L$ then 
$z^{com(L)-1}P_L(a,z) \in Z[a^{\pm 1},z]$ and it 
can be written as 
$\Sigma_{i=0}^M P_{2i}(a)z^{2i}$ \cite{L-M}. 

\begin{theorem}[Vertigan]\ \\
  $P_{2i}(a)$ can be computed in polynomial 
time. More precisely: let $D$ be a diagram of $L$ with $n$ 
crossings then the time complexity of computing 
$P_{2i}(a)(L)$ is $O(n^{2+3i})$.

\end{theorem}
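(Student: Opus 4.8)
The plan is to work with the normalized invariant
\[
\hat P_L := z^{\mathrm{com}(L)-1}P_L = \sum_{j\ge 0}P_{2j}(a)\,z^{2j}\ \in\ \mathbb{Z}[a^{\pm1},z],
\]
and to compute it modulo $z^{2i+2}$; this yields $P_0,\dots,P_{2i}$, and in particular $P_{2i}$. The first step is to record how the skein relation (ii) looks after this normalization. Since $\mathrm{com}(L_+)=\mathrm{com}(L_-)$ while $\mathrm{com}(L_0)=\mathrm{com}(L_\pm)\pm 1$, there are exactly two cases. If the distinguished crossing is a self-crossing of one component, its oriented smoothing splits that component in two, $\mathrm{com}(L_0)=\mathrm{com}(L_\pm)+1$, and (ii) becomes
\[
\hat P_{L_+} = -a^{-2}\hat P_{L_-} + a^{-1}\hat P_{L_0},
\]
with no power of $z$. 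If it is a crossing between two distinct components, smoothing merges them, $\mathrm{com}(L_0)=\mathrm{com}(L_\pm)-1$, and (ii) becomes
\[
\hat P_{L_+} = -a^{-2}\hat P_{L_-} + a^{-1}z^{2}\hat P_{L_0},
\]
gaining a factor $z^2$. I would also record the elementary facts $\hat P_{k\text{-unlink}}=(a+a^{-1})^{k-1}$ and $\hat P_{L_1\sqcup L_2}=(a+a^{-1})\hat P_{L_1}\hat P_{L_2}$ for a split diagram.

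With these in hand, the algorithm computes $\hat P_D\bmod z^{2i+2}$ recursively, carrying along each branch the accumulated power of $z$ and pruning (returning $0$) as soon as it reaches $z^{2i+2}$. For a diagram of a link with at least two components that is not split, choose a crossing between two distinct components and apply the second relation: the smoothing term $L_0$ has one fewer component and one fewer crossing and is needed only modulo $z^{2i}$, while for the switched term one keeps switching inter-component crossings until the components are pushed apart, ending at the split union $K_1\sqcup\dots\sqcup K_c$, whose truncated $\hat P$ is $(a+a^{-1})^{c-1}\prod_j\bigl(P_{K_j}\bmod z^{2i+2}\bigr)$. For a knot one switches self-crossings to make the diagram descending — hence an unknot, contributing $\pm a^{\mathrm{even}}$ — each switch spawning, via the first relation, a two-component link with one fewer crossing at the same $z$-level, which re-enters the previous case. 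Every use of the second relation strictly lowers the $z$-level, and within a fixed level every step strictly lowers the crossing number, so the recursion terminates; and a branch survives the pruning only if it has performed at most $i$ inter-component smoothings in total.

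The main obstacle is precisely that self-crossing smoothings cost no power of $z$, so the naive resolution tree — even with this pruning and even with memoization on the diagrams reached — can still have exponential size, because exponentially many distinct sub-diagrams may arise before the crossing number runs out. Turning the recursion into a genuinely polynomial procedure is exactly the role of the dynamic programming. The idea is not to expand the tree at all, but to process the crossings of $D$ in a fixed order (say, the descending order used above) and to maintain a table whose entries are the finitely many ``low-order $z$-data'' that can still influence $P_0,\dots,P_{2i}$: for each partially-processed diagram these amount to truncated polynomials $\sum_{j\le i}f_j(a)z^{2j}$ attached to a bounded amount of combinatorial data recording how the currently active strand pieces are grouped into components — and, crucially, a surviving state can have undergone at most $i$ component-merges, which is what keeps the set of such data, and hence the table, of size polynomial in $n$ for fixed $i$. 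Each crossing is then handled by a local update of this table using $\mathrm{poly}(n)$ arithmetic operations on Laurent polynomials in $a$ of degree $O(n)$.

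The hard part of the remaining work is the bookkeeping that shows the table has $O(n^{O(i)})$ cells and that the local updates, together with the base level, fit within $O(n^{2+3i})$: at level $i=0$ the only data that survives the pruning is essentially the component-and-linking structure, which is read off in $O(n^{2})$, and a careful count shows that each additional order in $z$ multiplies the running time by a factor $O(n^{3})$. (The same scheme, with the Kauffman skein relations in place of (ii), gives the analogous statement for the Kauffman polynomial, and the whole construction can be viewed as the subexponential divide-and-conquer algorithm of \cite{P-P-2} made polynomial by truncating in the variable $z$.)
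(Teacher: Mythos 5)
You have the right global strategy --- truncate at $z^{2i+2}$, separate self-crossings (which cost no power of $z$) from mixed crossings (which cost $z^2$), and you correctly isolate the real difficulty: resolving self-crossings generates a tree that is a priori exponential, and only dynamic programming over a \emph{polynomial} family of subproblems can save the day. But that family is exactly what your proposal never exhibits. You posit a table indexed by ``truncated polynomials attached to a bounded amount of combinatorial data recording how the currently active strand pieces are grouped,'' assert that pruning after $i$ merges keeps it polynomial, and then explicitly defer ``the hard part of the remaining work.'' Nothing in the write-up shows that the number of reachable states is $O(n^{O(i)})$ rather than exponential (groupings of $O(n)$ strand ends are generically counted by Catalan/Bell-type numbers), and your claim that at level $i=0$ ``the only data that survives the pruning is essentially the component-and-linking structure'' cannot be right as stated: $P_0(a)$ of a knot is a genuinely nontrivial Laurent polynomial, not determined by any linking data, and computing it is precisely the content of Theorem 2.1.

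The paper's key construction, which is absent from your argument, is the following. Put points $b_0,\dots,b_m$ on the edges of the knot diagram in the order given by the orientation, let $D_{i,j}$ be the arc of $D$ from $b_i$ to $b_j$, and let $\hat D_{i,j}$ be its closure by an overpass. These $O(n^2)$ diagrams are the subproblems. If the first crossing $q$ after $b_i$ has the arc $b_ib_{i+1}$ as overpass, then $\hat D_{i,j}=\hat D_{i+1,j}$; if as underpass, switching $q$ gives $\hat D_{i+1,j}$ and --- this is the decisive point --- smoothing $q$ gives a two-component link whose components are $\hat D_{i+1,k}$ and $\hat D_{k+1,j}$, again members of the same family, so the multiplicativity of $P_0$ over components (Lickorish--Millett) closes the recursion within the $O(n^2)$ table, with Lemma 2.2 ($\hat D_{i,j}$ unknotted for $j-i\le 3$) as the base case. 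The induction on $i$ (formula 3.2, which introduces at most $n$ auxiliary diagrams with one fewer crossing and one fewer component at each level) then yields $O(n^{2+3i})$. Without this construction, or some equally concrete replacement for it, your proposal identifies the obstacle but does not overcome it.
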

In fact Vertigan announced $O(n^{2+2i})$ time algorithm but the proof is
more involved than that of Theorem 1.1, in which case one easily reduces 
the theorem for $P_{2i}(a)$ for links  to the result for
$P_0(a) $ for knots. We describe the case of $P_0(a)$ first.

\section{Computation of $P_0(a)$}
\begin{theorem}\label{2.1}
Let $n$ denote the number of crossings of a knot diagram then
$P_0(a)$ can be computed in quadratic time (i.e. in $O(n^2)$ time).
\end{theorem}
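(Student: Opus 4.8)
\medskip
\noindent\textit{Proof proposal.} The plan is to turn the skein relation into a recursion on the $z^0$-coefficient alone. Put $\hat P_L := z^{com(L)-1}P_L\in Z[a^{\pm1},z]$ and $p_L(a):=\hat P_L(a,0)$, so that $p_K=P_0(a)$ for a knot $K$. Multiplying (ii) by $z^{com(L)-1}$ and extracting the constant term in $z$ gives two cases according to the crossing being altered. If that crossing joins two distinct components of $L_\pm$, smoothing \emph{merges} them, $\hat P$ acquires an extra factor $z^2$, and the constant term reads $a\,p_{L_+}+a^{-1}p_{L_-}=0$; thus switching such a ``mixed'' crossing only multiplies $p$ by the unit $-a^{\pm2}$. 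If the crossing is a self-crossing, smoothing \emph{splits} a component and the constant term reads $a\,p_{L_+}+a^{-1}p_{L_-}=p_{L_0}$ with $com(L_0)=com(L_\pm)+1$. Together with the normalisations $p_{\mathrm{unknot}}=1$ and $p_{L_1\sqcup L_2}=(a+a^{-1})\,p_{L_1}p_{L_2}$ (the latter from $P_{L_1\sqcup L_2}=\tfrac{a+a^{-1}}{z}P_{L_1}P_{L_2}$), these are the only facts about $P$ that enter.

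First I would establish a normal form for the ``mixed'' part. Using the rule above, switching all crossings between two chosen components until one lies entirely over the other splits them apart and multiplies $p$ by $(-a^{2})^{\varepsilon\, lk}$, with $\varepsilon=\pm1$ fixed and $lk$ the linking number. Hence, for any $2$-component diagram $L_0$ obtained by smoothing a self-crossing of a knot diagram, $p_{L_0}=(-a^{2})^{\varepsilon\, lk(K_1,K_2)}(a+a^{-1})\,p_{K_1}p_{K_2}$, where $K_1,K_2$ are the two knot diagrams cut out by the smoothing; crucially $K_1\sqcup K_2$ carries strictly fewer crossings than $L_0$, and each $K_i$ is the closure of a subarc of the original diagram, namely the arc lying between the two occurrences of the smoothed crossing in the Gauss code.

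Next comes the main recursion. Fix a base point and orientation on the knot diagram $D$ and scan once along $D$, switching every crossing first met as an undercrossing; after at most $n$ switches $D$ is descending, hence trivial, with $p=1$. Undoing the self-crossing rule at the $j$-th switched crossing $c_j$ writes $P_0(a)(D)=(\text{monomial})+\sum_j(\text{monomial}_j)\,p_{L_0^{(j)}}$, the sum over the $\le n$ switched crossings, where $L_0^{(j)}$ is obtained by smoothing $c_j$ in the partially corrected diagram. By the previous step each summand is a monomial times $(a+a^{-1})$ times $p_{K_1^{(j)}}p_{K_2^{(j)}}$, each $K_i^{(j)}$ a closed subarc of $D$ (equipped with the switches made so far, a set determined by $j$) with fewer crossings. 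One then recurses on these knot diagrams; all linking numbers and monomials are read straight off the Gauss data, so each reduction costs linear time.

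The remaining work is the complexity bookkeeping, and this is where I expect the real difficulty. One must show that the sub-diagrams ever produced form a family of only polynomial size — they are all closures of subwords of the Gauss code of $D$, and once the record of which crossings have been switched is pinned down each such sub-knot is indexed by a bounded amount of combinatorial data, so there are $O(n^2)$ of them at most — and that memoising the value $p$ over this family (running the recursion as a dynamic program rather than as a branching tree, each entry assembled from already-tabulated ones in $O(n)$ time) collapses the total cost to $O(n^2)$. Verifying that the family genuinely closes up under the recursion, that the attached linking-number monomials are consistent across different derivations of the same sub-knot, and that the per-entry cost is really $O(n)$ — so that $T(n)=O(n)\cdot O(n)$ and not the $O(n^3)$ a naive count suggests — is exactly the content of Vertigan's dynamic-programming idea and the point that must be carried out with care.
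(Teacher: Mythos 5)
Your overall strategy coincides with the paper's: reduce the skein relation to the $z^{0}$ coefficient, use the product formula for the split smoothing, and memoise over closures of subarcs of $D$. The algebraic preliminaries are all correct (the two reduced relations $a\,p_{L_+}+a^{-1}p_{L_-}=0$ resp.\ $=p_{L_0}$, and $p_{L_1\sqcup L_2}=(a+a^{-1})p_{L_1}p_{L_2}$). But the step you defer --- ``verifying that the family genuinely closes up under the recursion'' and that the switch-sets are harmless --- is not bookkeeping to be checked afterwards; it is the one idea the proof actually needs. As you have set it up, a subproblem is a subword of the Gauss code \emph{decorated by the set of crossings switched so far}, and it is neither visible that this family has polynomial size nor that two derivations of ``the same'' sub-knot carry consistent decorations. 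A proof that stops there has not yet proved the $O(n^2)$ bound.

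The device that closes the gap is the choice of closure. Put points $b_0,\dots,b_m=b_0$ ($m=2n$) on the edges of $D$ in the order of traversal, let $D_{i,j}$ be the arc from $b_i$ to $b_j$, and let $\hat D_{i,j}$ be its closure by an arc from $b_j$ back to $b_i$ running \emph{over} the entire diagram. Let $q$ be the first crossing met after $b_i$, involving the arc $b_ib_{i+1}$ and some arc $b_kb_{k+1}$. If $b_ib_{i+1}$ passes over at $q$ then $\hat D_{i,j}=\hat D_{i+1,j}$ outright; if it passes under, then \emph{switching} $q$ also yields exactly $\hat D_{i+1,j}$, because after the switch the path formed by the overpass closure together with $b_ib_{i+1}$ is an unobstructed overpass from $b_j$ to $b_{i+1}$, while smoothing $q$ yields the split pair $\hat D_{i+1,k}\sqcup\hat D_{k+1,j}$. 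Thus a switched crossing is \emph{absorbed into the closure arc} rather than recorded: every subproblem that ever arises is an honest $\hat D_{s,t}$ with $0\le s<t\le m$, the recursion strictly increases $s$ and never increases $t$, and the base case $t-s\le 3$ is the unknot with $p=1$. This gives a table of at most $m^2/2=2n^2$ entries, each filled in constant time from entries already present. Your top-down version (switch to descending, then expand) can be repaired --- the switches carried by the outer component of each smoothing all lie on an initial segment that has been made descending and lies over everything later, so that segment isotopes away into precisely such an overpass closure --- but that repair \emph{is} the missing observation, not a routine verification.
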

\begin{proof} Let $D$ be an oriented knot diagram with $n$ crossings.
We can think of $D$ as a 4-regular graph (any crossing is a vertex
of valency four). Choose a point inside any edge of $D$ and order them
according to the orientation of the knot. So we get points $b_0,b_1,...,
b_{m-1},b_m=b_0$ where $m$ is the number of edges of $D$ (in fact
$m = 2n$). We think of $b_0$ 
as a base point of $D$. Let $D_{i,j},(0\leq i<j\leq m),$ denote
the part of $D$ between points $b_i$ and $b_j$ (with the convention that
$D_{0,m}$ denotes $D$). 
Further let $\hat D_{i,j}$ denote the closure of $D_{i,j}$,
that is we join, in $\hat D_{i,j}$,  
$b_j$ with $b_i$ by an overpass (an arc going above the rest of the diagram), 
compare Fig.2.1.

\ \\
\centerline{\psfig{figure=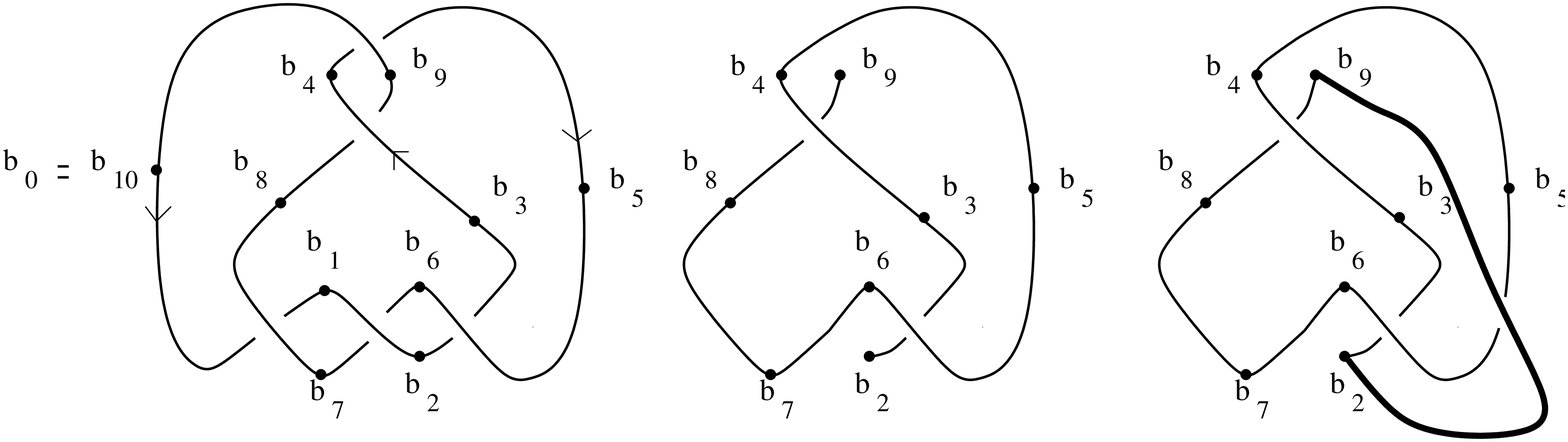,height=4.6cm}}
\centerline{Fig. 2.1;\ \ 
$D=D_{0,10}=\hat D_{0,10}$,\ $D_{2,9}$\  and \ $\hat D_{2,9}$}

\begin{lemma}\label{x.3}
If $j-i \leq 3$ then $\hat D_{i,j}$ represents the unknot.
\end{lemma}
\begin{proof}
$D_{i,j}$, for $j-i\leq 3,$ can have at most one 
crossing and $\hat D_{i,j}$ can be drawn with
no more than one crossing. Therefore $\hat D_{i,j}$ represents the unknot.
\end{proof}
Notice that $\hat D_{i,i+4}$ cannot represent a nontrivial knot 
neither.
$\hat D_{i,i+5}$ can represent only a trefoil knot or the unknot
(compare Section 5).

To continue the proof of Theorem 2.1 first observe that 
if diagrams $D_+, D_-$ and $D_0$ form a skein triplet 
then the skein relation for the skein
polynomial $P(a,z)$ reduces, for $P_0(a)$, to the formula:
\[ aP_0(a)(D_+) + a^{-1}P_0(a)(D_-) = \left \{
\begin{array}{ll}
 P_0(a)(D_0) & \mbox{in the case of a selfcrossing}\\
0 & \mbox{in the case of a crossing}\\
 & \mbox{between different components}
\end{array}
\right .
\]

For the trivial link of $n$ components, $T_n$, one has 
$P_0(a)(T_n)= (a+a^{-1})^{n-1}$. Now consider $D_{i,j}$ which has a crossing 
(otherwise $\hat D_{i,j}$ represents the unknot). Let $q$ be the first crossing
in $D_{i,j}$ after $b_i$. Without lost of generality we can assume that the
arc $b_i, b_{i+1}$ is involved in the crossing 
(otherwise $D_{i,j}=D_{i+1,j}$).
We have two possibilities:
\begin{enumerate}
\item[(i)] the arc $b_i,b_{i+1}$ is an overpass and then $\hat D_{i,j} = \hat D_{i+1,j}$, or
\item[(ii)]
 the arc $b_i,b_{i+1}$ is an underpass and in that case we consider the skein triplet
   $\hat D_{i,j}, \hat D'_{i,j}$ and $\hat D^0_{i,j}$ where the second
   element of the triplet is obtained from the first by changing at $q$ the
undercrossing to the overcrossing and the third by smoothing it at $q$.
The important observation here is that $\hat D'_{i,j} = \hat D_{i+1,j}$ and
that $\hat D^0_{i,j}$ is a two component link composed of $\hat D_{i+1,k}$
and $\hat D_{k+1,j}$ where $i<k\leq j$ and $q$ is the crossing between
arcs $b_i,b_{i+1}$ and $b_k,b_{k+1}$ (compare Fig.2.2). 
The first coefficient of a two
component link can be easily computed from that of the components (see \cite
{L-M} or formula 3.1). Therefore we get:
$$a^{\epsilon (q)}P_0(a)(\hat D_{i,j})+
a^{-\epsilon (q)}P_0(a)(\hat D_{i+1,j})= $$
$$ (-a^{-2})^{lk(\hat D^0_{i,j})}(a+a^{-1})P_0(a)(\hat D_{i+1,k})
P_0(a)(\hat D_{k+1,j})$$
where $\epsilon (q)$ is the sign of the crossing $q$ and $lk(L)$ the 
global linking number of the link $L$.
\end{enumerate}

(i) and (ii) allow as to reduce the computation of $P_0(a)(\hat D_{i,j})$
to that of $P_0(a)(\hat D_{s,t})$ with $i<s$ and $t \leq j$. Furthermore we
know the value of $P_0(a)(\hat D_{i,j})$  
to be equal to $1$ for $j-i\leq 3$ by Lemma 2.2.
Therefore we can find the value of $P_0(a)(\hat D_{i,j})$ for any $0\leq i
<j \leq m$ , including that for $D=D_{0,m}=\hat D_{0,m}$, in at most
$m^2 /2 =2n^2$ steps. This completes the proof of Theorem 2.1.
\end{proof}

\ \\
\centerline{\psfig{figure=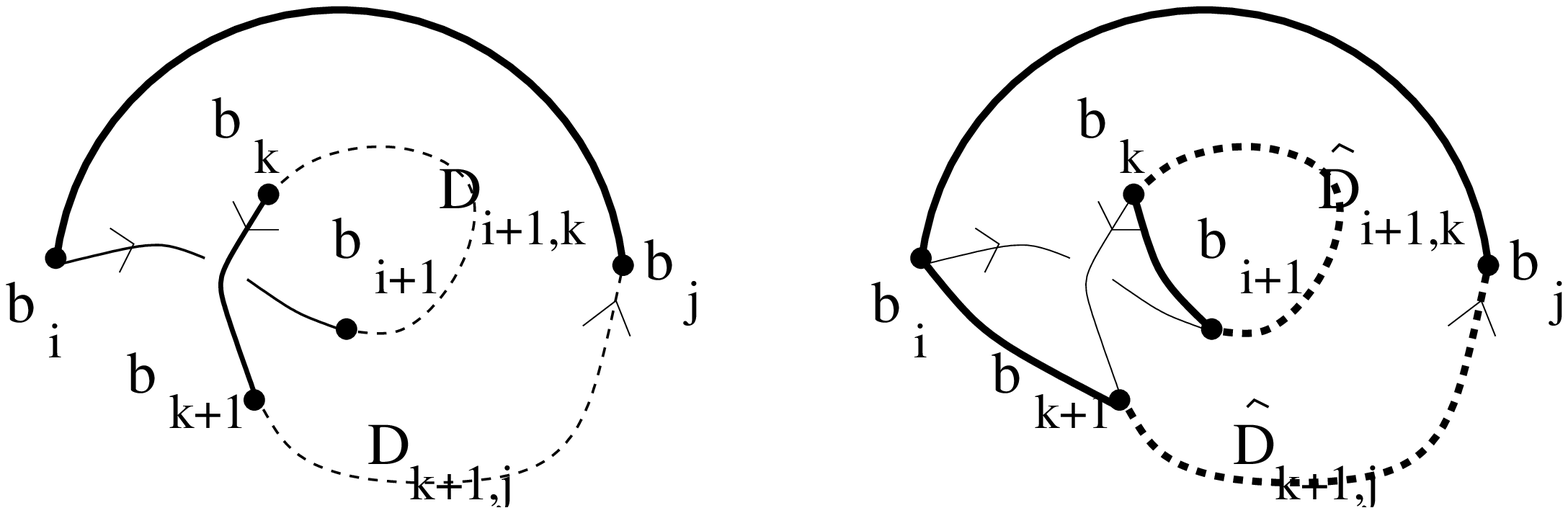,height=4.7cm}}
\begin{center}
        Fig. 2.2;\ \  $\hat D_{i,j}$, $\hat D_{k+1,j}$ and $\hat D_{i+1,j}$
\end{center}

Note that we do not address technical details of complexity of presenting
the computed polynomial in the ordered form. One can improve constant
by considering $D$ or its mirror image $\bar D$ and observing that
$D$ or $\bar D$ can be changed to a descending diagram by switching
no more than $\frac{n}{2}$ crossings.

\section{Computation of $P_{2i}(a)$.}

To finish the proof of Theorem 1.1 first observe that 
Theorem 2.1 can
be extended to the case of a link by the Lickorish-Millett formula 
\cite {L-M}:

\begin{Label}\label{3.1} \ \\

For a link $L$ of $com(L)$ components $K_1,K_2,...K_{com(L)}$
$$P_0(a)(L)=(-a^{-2})^{lk(L)}(a+a^{-1})^{com(L)-1}
\Pi_{i=1}^{com(L)}P_0(a)(K_i)$$
\end{Label}

We assume that the number of components of a link is not too big 
with respect to the number of crossings. 
It remains to see that one can find $P_{2i+2}(a)$ in $O(n^{2+3(i+1)})$ time
assuming that $P_{2i}(a)$ can be found in $O(n^{2+3i})$ time.
We use the generalization of Formula 3.1 to any coefficient $P_{2i}(a)$:

\begin{Label}\label{3.2}

$$P_{2i+2}(a)(L)=(-a^{-2})^{lk(L)}(a+a^{-1})^{com(L)-1}
\Pi_{j=1}^{com(L)}P_{2i+2}(a)(K_j) + \Sigma_{j=1}^{n'}P_{2i}(a)(L_j)$$ 
where $n'$ denotes the number of crossings between different 
components of the considered digram of $L$ 
(therefore $n' \leq n$)and $L_j$'s are certain $n-1$
crossing $com(L)-1$ component link diagrams obtained from $L$.
\end{Label}
Formula 3.2 follow from the recursive relation:
\[ aP_{2n+2}(a)(D_+) + a^{-1}P_{2n+2}(a)(D_-) = 
\left \{
\begin{array}{ll}
 P_{2n+2}(a)(D_0) & \mbox{in the case of a selfcrossing}\\
P_{2n}(a)(D_0) & \mbox{in the case of a crossing}\\
 & \mbox{between different components}
\end{array}
\right .
\]

Then we proceed exactly as in the proof of Theorem 2.1 except that for the
value of $P_{2i+2}(a)(\hat D^0_{i,j})$ one has to use 
formula 3.2 instead of 3.1.

\section{Coefficients of the Kauffman polynomial, $F_L(a,z)$.}

The Vertigan algorithm can be used also to compute first coefficients
of the Kauffman polynomial, $F_L(a,z)$, in polynomial time. One can write
$z^{com(L)-1}F_L(a,z)$ as $\Sigma_{i=0}^NF_i(a)z^i$.

\begin{theorem}[Vertigan] \label{1.6} 
$F_i(a)$ can be computed in polynomial time. More precisely: let $D$ be
a diagram of $L$ with $n$ crossings, then the time complexity of computing
$F_i(a)(L)$ is $O(n^{2+2i})$.
\end{theorem}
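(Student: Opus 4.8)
The plan is to mirror the proof of Theorem~2.1 and its extension in Section~3, replacing the Homflypt skein relation by the Kauffman skein relation and keeping track of the extra data that the Kauffman polynomial requires. First I would recall that the Kauffman polynomial $F_L(a,z)$ is defined through the Dubrovnik (or Kauffman) polynomial $\Lambda_D(a,z)$ of \emph{unoriented} diagrams, via $F_L(a,z)=a^{-w(D)}\Lambda_D(a,z)$ where $w(D)$ is the writhe, so that the underlying relation on diagrams involves all four local pictures $D_+,D_-,D_0,D_\infty$ together with the Reidemeister~I factor $a^{\pm1}$. Since the writhe is computed crossing by crossing and contributes only a monomial factor, it suffices to show that the coefficients of $z^{\,i}$ in $z^{\operatorname{com}(L)-1}\Lambda_D(a,z)$ can be computed in $O(n^{2+2i})$ time; the writhe normalization costs nothing extra.

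Next I would set up, exactly as in Section~2, a basepoint $b_0$ on the diagram, the marked points $b_0,\dots,b_m=b_0$ with $m=2n$, and the partial diagrams $D_{i,j}$ together with their closures $\hat D_{i,j}$ (here the closure arc may be taken either as an overpass or, since the polynomial is unoriented, simply as an arc in the outer region). The analogue of Lemma~2.2 holds: if $j-i\le3$ then $\hat D_{i,j}$ has at most one crossing and hence represents the unknot, so its Kauffman polynomial is $1$ and we have a base case for the dynamic program. For the recursive step, I process the first crossing $q$ encountered after $b_i$ on the arc $b_i b_{i+1}$. Resolving $q$ with the Kauffman relation expresses $\Lambda(\hat D_{i,j})$ as a linear combination (with coefficients in $\mathbb Z[a^{\pm1}]$ times small powers of $z$) of $\Lambda(\hat D_{i+1,j})$ — the crossing-change term — and of the two smoothing terms, which by the same argument as in Fig.~2.2 split the diagram as a connected/disjoint combination of $\hat D_{i+1,k}$ and $\hat D_{k+1,j}$ for the appropriate index $k$ determined by $q$. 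Because both smoothings (the $0$- and $\infty$-smoothings) appear, one of the two split pieces will in general be a two-component object, but the Kauffman polynomial of a split or connected sum is multiplicative up to the known factor $(-(a+a^{-1})z^{-1}+1)$-type corrections, so it is again expressible through the $\hat D$-values already tabulated — this is the Kauffman analogue of Formula~3.1/3.2.

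The key point that drives the exponent is that, in the Kauffman setting, the polynomials $\Lambda(\hat D_{i,j})$ we need to store are only the ones up to the relevant $z$-order: to recover $F_i(a)$ we carry each tabulated polynomial modulo $z^{\,i+1}$ (equivalently we keep only finitely many coefficient-polynomials in $a$), and the linking/splitting corrections raise the $z$-order in a controlled way, so truncating at level $i$ everywhere suffices. There are $O(m^2)=O(n^2)$ pairs $(i,j)$; for each we do an amount of polynomial arithmetic bounded by $O(n^{2i})$ (the size of a degree-$i$ truncation whose $a$-coefficients have $O(n)$ terms, multiplied during the at most $O(n)$ splittings), giving the claimed $O(n^{2+2i})$. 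I expect the main obstacle to be the bookkeeping in the recursive step: unlike the oriented case there is no canonical ``smoothing'', both resolutions occur, and one must verify that after resolving at $q$ the two smoothed diagrams really do decompose as closures $\hat D_{s,t}$ with $i<s$ and $t\le j$ (so that the dynamic program is well-founded) and that the split/connected-sum corrections for the Kauffman polynomial stay within the truncation order. Once that combinatorial reduction is pinned down, the complexity count is identical in spirit to the proof of Theorem~2.1, and applying the writhe normalization to pass from $\Lambda$ to $F$ completes the argument.
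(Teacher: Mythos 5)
Your plan to run the Section~2 dynamic program directly on a truncated unoriented Kauffman polynomial breaks down at exactly the point you flag as ``the main obstacle,'' and that obstacle is not bookkeeping but a genuine failure of the approach. When you resolve the first crossing $q$ of $\hat D_{i,j}$ (between the arcs $b_ib_{i+1}$ and $b_kb_{k+1}$), only the orientation-preserving smoothing $D_0$ splits the diagram into the two closures $\hat D_{i+1,k}$ and $\hat D_{k+1,j}$. The other smoothing $D_{\infty}$ joins the end near $b_i$ to the end near $b_k$ and the end near $b_{i+1}$ to the end near $b_{k+1}$: the result is a single-component diagram in which the segment from $b_{i+1}$ to $b_k$ is traversed in the reverse direction. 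That diagram is not of the form $\hat D_{s,t}$ for any $s,t$, so the state space of pairs $(i,j)$ does not close up under the Kauffman relation and your dynamic program is not well-founded. (It is also neither a split union nor a connected sum, so no multiplicativity correction rescues it.)

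The paper's sketch avoids this in two ways that are absent from your proposal. First, the base case is the observation that $F_0(a)(L)=P_0(a)(L)$, so the zeroth Kauffman coefficient is computed by the already-established Homflypt algorithm and no unoriented dynamic program is needed at all. Second, for higher coefficients one uses the coefficientwise skein relation of Formula~4.2: there the troublesome $D_{\infty}$ diagram always appears with a strictly lower coefficient index ($F_{i+1}$ or $F_i$ when one is computing $F_{i+2}$). Each $D_{\infty}$ term is therefore handled by the outer induction on the coefficient --- one reruns the whole algorithm for the lower coefficient on the $(n-1)$-crossing diagram $D_{\infty}$ --- while the inner dynamic program over the pairs $(i,j)$ uses only crossing changes and the oriented smoothing $D_0$, which do stay within the family $\hat D_{s,t}$. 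This is what produces the exponent $2+2i$: $O(n^2)$ states, each contributing a $D_{\infty}$ term that costs $O(n^{2i})$ by the inductive hypothesis. Without the identity $F_0=P_0$ and without routing $D_{\infty}$ through the induction on the coefficient index rather than through the table of $\hat D_{s,t}$, the argument does not go through.
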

\begin{proof}(sketch)
The main point of the proof is the observation that $F_0(a)(L)=P_0(a)(L)$
(compare \cite{Pr} or \cite{Li})). The additional information needed
in the proof is the skein relation connecting coefficients of the Kauffman
polynomial of diagrams $D_+, D_-, D_0$ and $D_{\infty}$: 

\begin{Label}\label{1.7}

$$a^{w(D_+)}aF_{i+2}(a)(D_+)+a^{w(D_-)}a^{-1}F_{i+2}(a)(D_-) =$$
\[ = \left \{ \begin{array}{ll}
 a^{w(D_0)}F_{i+2}(a)(D_0) +a^{w(D_{\infty})}F_{i+1}(a)(D_{\infty}) &
 \mbox {in the case of a selfcrossing} \\
a^{w(D_0)}F_i(a)(D_0) +a^{w(D_{\infty})}F_i(a)(D_{\infty})
& \mbox{ in the case of a mixed crossing}
\end{array}
\right .
\]
 where $D_+, D_-$ and $D_0$ are consistently oriented diagrams.
For $D_{\infty}$ we can choose any orientation which agrees with that of $D_+$
outside components involved in the crossing. $w(D)$ is the planar writhe (or
Tait number) of $D$ equal to the algebraic sum of signs of crossings.
\end{Label}
\end{proof}

\section{Polynomials of virtual diagrams.}
As a comment to the note after Lemma 2.2 one should stress that $D_{i,i+4}$
from Figure 5.1 cannot be obtained from any diagram $D$, so formally
if $j-i =4$ then $\hat D_{i,i+4}$ represents the unknot. Only
$\hat D_{i,i+5}$ can represent a trefoil (as illustrated in Figure 5.2).

\ \\
\centerline{\psfig{figure=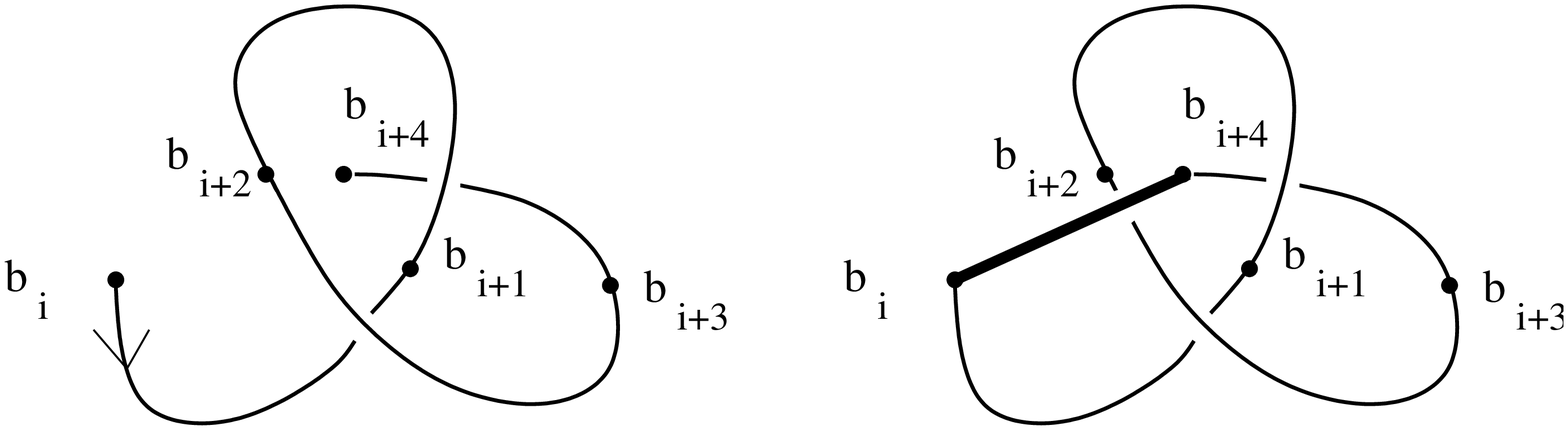,height=4.7cm}}
\begin{center}
        Fig. 5.1;\ \  $D_{i,+4}$,\ \ \ \ \ \ $\hat D_{i,i+4}$ 
\end{center}

\ \\
\centerline{\psfig{figure=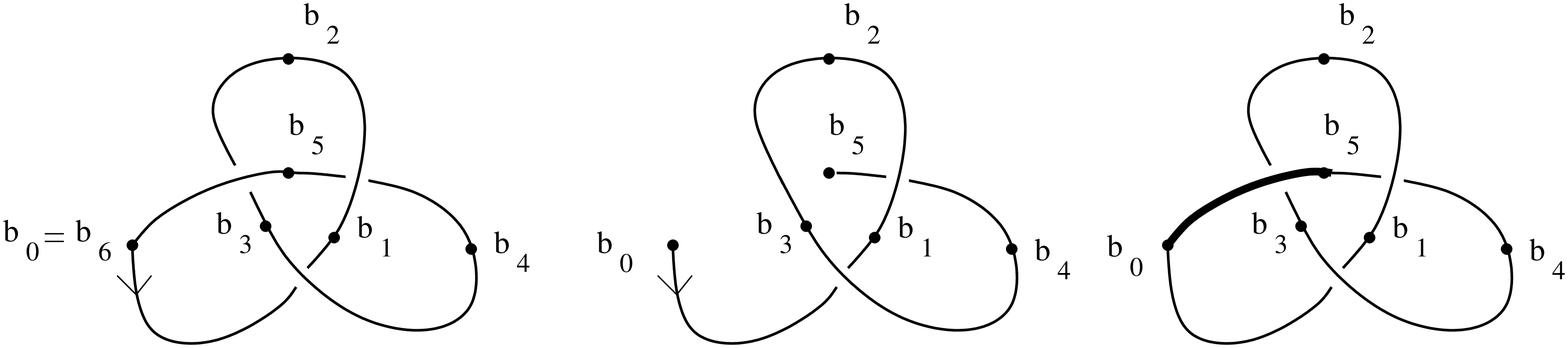,height=4.3cm}}
\begin{center}
      Fig. 5.2;\ \  $D = D_{0,6}$,\ \ \ $D_{0,5}$ \ \ and \ \ $\hat D_{0,5}$
\end{center}

   However, more possibilities arrive if we allow virtual 
diagrams (as introduced by Kauffman \cite{Kau}). 
It may be interesting to use Vertigan algorithm 
for skein (Homflypt) and Kauffman polynomials of virtual knots.

\section{Dynamic programming}\label{Section 6}

The method of dynamic programming, used in Vertigan algorithm is not familiar in knot theory circles, thus we give a short, 
historically based, introduction to the topic.

From \cite{CLR}:\\
R.Bellman began the systematic study of dynamic programming in 1955.
The word ``programming," both here and in linear programming,
refers to the use of a tabular solution method. Although
optimization techniques incorporating elements of dynamic
programming were known earlier, Bellman provided the area with a solid
mathematical basis (Richard Bellman \cite{Be}).

Dynamic programming is effective when a given subproblem
may arise from more than one partial set of choices; the key technique is to
store, or ``memorize," the solution to each such subproblem 
in case it should reappear.
...this simple idea can easily transform exponential-time algorithms into
polynomial-time algorithms.

Example: Longest common subsequence.\\
$O(mn)$-time algorithm for the longest-common-subsequence 
problem seems to be a folk algorithm.\\
In a longest-common-subsequence problem, we are given two sequences
$X=(x_1,x_2,...,x_m)$ and $Y=(y_1,y_2,...,y_n)$ 
and wish to find a maximum-length
common subsequence of $X$ and $Y$."

Another example of dynamic programming is used in H.~Morton's
algorithm computing the Homflypt polynomial of
closed $k$ braids (fixed $k$) in polynomial time with respect to the
number of crossings \cite{M-S}.

\section{Knotoids of Vladimir Turaev}\label{Section 7}
One should mention here that the theory of Knotoids introduced by V. Turaev in 2010 \cite{Tur} is, at least in its pictographic form, very much related 
to Vertigan approach to compute first coefficients of the Homflypt and Kauffman polynomials.

\section{Acknowledgements}\label{Section 8}
I would like to thank Dirk Vertigan for permission to use his result in this note.

I was partially supported by the Simons Foundation Collaboration Grant for Mathematicians--316446.

\ \\
Department of Mathematics,\\
The George Washington University,\\
Washington, DC 20052\\
e-mail: {\tt przytyck@gwu.edu},\\
and University of Gda\'nsk, Poland

\end{document}